\newtheorem{thm}{Theorem}[section]
\newtheorem{legend}[thm]{Legend}
\newtheorem{philo}[thm]{Philosophy}
\newtheorem{lem}[thm]{Lemma}
\newcommand{\IC}{\mathbb{C}}
\newcommand{\IP}{\mathbb{P}}
\title[An Example of Automorphism]{Example of an order 16 non symplectic action on a K3 surface} 
\author{Jimmy Dillies}
\address{Georgia Southern University, Department of Mathematics. Statesboro, GA}
\email{jdillies@georgiasouthern.edu}
\thanks{The author would like to thank Alessandra Sarti for her kind and useful email communication; Fran\c{c}ois Ziegler for patiently probing the unwritten details and an anonymous referee for his recommendations.}
\subjclass[2000]{Primary 14J28; Secondary 14J50, 14J10}
\keywords{Non-symplectic automorphism, K3 surfaces}
\date{}
\begin{document}


\begin{abstract}
\noindent We exhibit an example of a K3 surface of Picard rank $14$ with a non-symplectic automorphism of order $16$ which fixes a rational curve and $10$ isolated points.  
This settles the existence problem for the last case of Al Tabbaa, Sarti and Taki's classification~\cite{AST}.
\end{abstract}


\maketitle 


\section{Introduction}

Let $X$ be a K3 surface and $\sigma$ an automorphism of $X$. 
The action of $\sigma$ on a volume form on $X$ gives rise to a character $\sigma^{*}$; when this character is non-trivial, we say that the action of $\sigma$ is non-symplectic.
Moreover, we call the action primitive if $\sigma$ and $\sigma^{*}$ have the same order.\\
In~\cite{AST}, D. Al Tabbaa, A. Sarti and S. Taki classify K3 surfaces admitting a primitive non-symplectic automorphism of order 16.
Their classification is complete but for the existence of a K3 surface of Picard lattice $U(2)\oplus D_{4} \oplus E_{8}$ with a non-symplectic automorphism of order $16$ whose fixed locus would consist of $N=10$ isolated points and $k=1$  rational curve.\\
In this brief note, we use our `rigidity' criterion (see~\cite{Di0} or~\cite{Di1}) on symplectic or non-symplectic actions on graphs of rational curves  to construct a model of such a K3 surface and automorphism.\\
We also give a different representation of the action with $N=4$ fixed points and no fixed lines found in Al Tabbaa et al. 
This allows us to show explicitly that both their action and our action can be distinct factorizations of a same non-symplectic automorphism of order $8$.

\begin{legend}
To facilitate a diagonal reading of this article, we list here the conventions used in the Figures below.
{%
\noindent Legend for Figure~\ref{fig:resolution} :  
\protect\includegraphics[height=.8em]{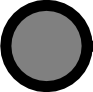} component of the resolution of a singularity of the sextic;
for Figure~\ref{fig:fibrations} : 
\protect\includegraphics[height=.8em]{ball_grey.png} component of singular fiber,  
\protect\includegraphics[height=.8em]{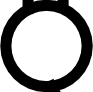} section or multi-section;
for Figures~\ref{fig:chain} - ~\ref{fig:sym}: 
\protect\includegraphics[height=.8em]{ball_grey.png} fixed curve,  
\protect\includegraphics[height=.8em]{ball_white.png} mobile curve,
\protect\includegraphics[height=.8em]{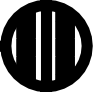} stable curve, 
\protect\includegraphics[height=.8em]{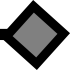} fixed point.
}
\end{legend}

\section{Premises}

In~\cite{AST}, it is shown that if a K3 surface of Picard rank $14$ admits a non-symplectic automorphism $\sigma$ of order $16$ with a fixed locus consisting of $N=10$ isolated points and $k=1$ fixed rational curve, then the Picard lattice of the surface is isomorphic to $U(2)\oplus D_{4} \oplus E_{8}$.
Moreover, to illustrate another type of action (see Section~\ref{sec:appendix}), the authors provide a model of such a surface:\\
Define $S$ as the reducible planar sextic of equation
$$
x_{0}\left( x_{0}^{4}x_{2} + x_{1}^{5} - 2 x_{1}^{3} x_{2}^{2} + x_{2}^{4}x_{1} \right) = 0.
$$
The sextic consists of a line $L$ of equation $x_{0}=0$ and a quintic $C$ described by remaining factor in the above equation.
The surface with Picard lattice  $U(2)\oplus D_{4} \oplus E_{8}$ is the resolution of the double cover of the plane branched along $S$.

\section{The Picard lattice}

Given the Picard lattice, one can rebuild the incidence graph of smooth rational curves on a K3 surface.
This was explained, and worked out explicitly for the above lattice, by S.-M. Belcastro in~\cite{Bel}. 
We represent this graph in Figure~\ref{fig:resolution}.
Part of the rational curves on the above surface are the exceptional curves that appear when blowing up the two $D_{6}$ and the $A_{1}$ singularities which lie at the intersection of $L$ and $C$. 
Given that these three configurations are disjoint, we can identify them unambiguously on the above graph --
the vertices corresponding to these curves are colored in grey on the graph.
Furthermore, we can now identify $L$ on the graph: it is the lone curve which intersects all three singular configurations:

\begin{figure}[!htb]

\centering
\includegraphics[height=2cm]{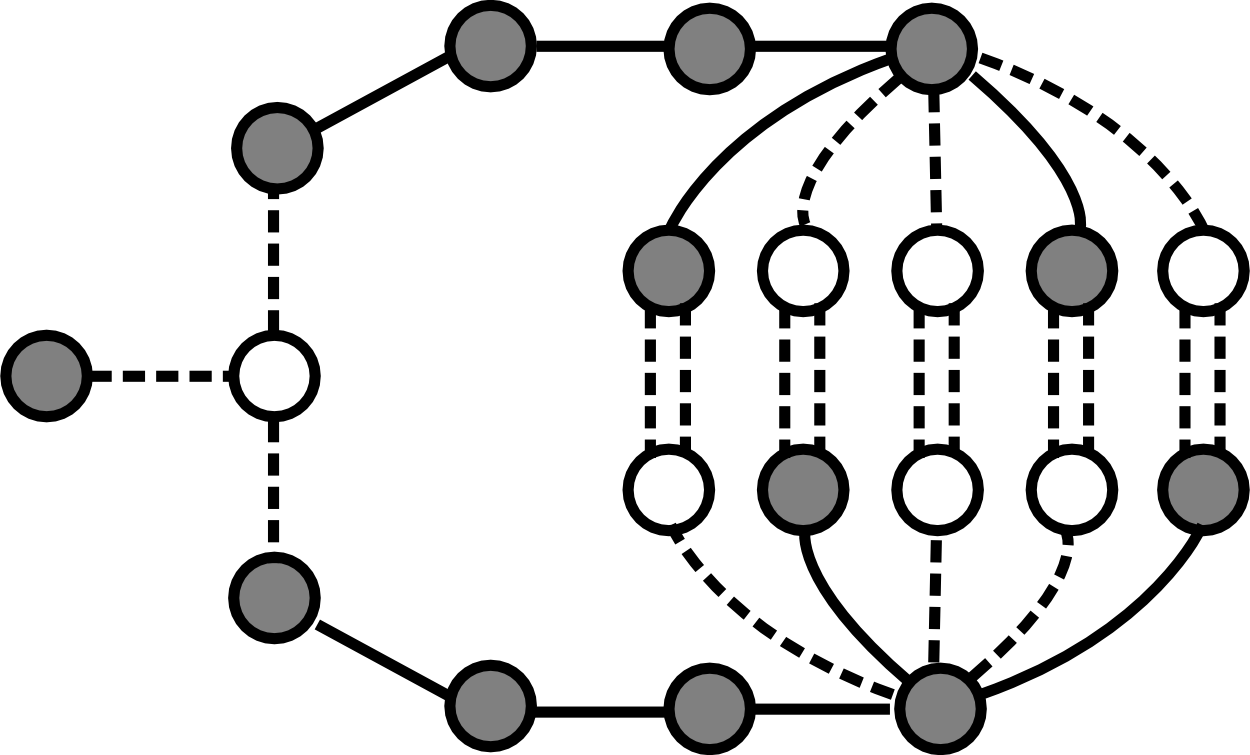}

\caption{Exceptional curves coming from blowing-up along the sextic $S$.}
\label{fig:resolution}

\end{figure}

Our argument will rely heavily on the above graph.
Before we move on, remark that, as in Figure~\ref{fig:fibrations}, we can decompose the graph in different ways.
These decompositions come in parallel with distinguished types of genus $1$ fibrations on our surface.

\begin{figure}[!htb]
       
       \centering
        
        \begin{subfigure}[b]{0.3\textwidth}
              \includegraphics[height=2cm]{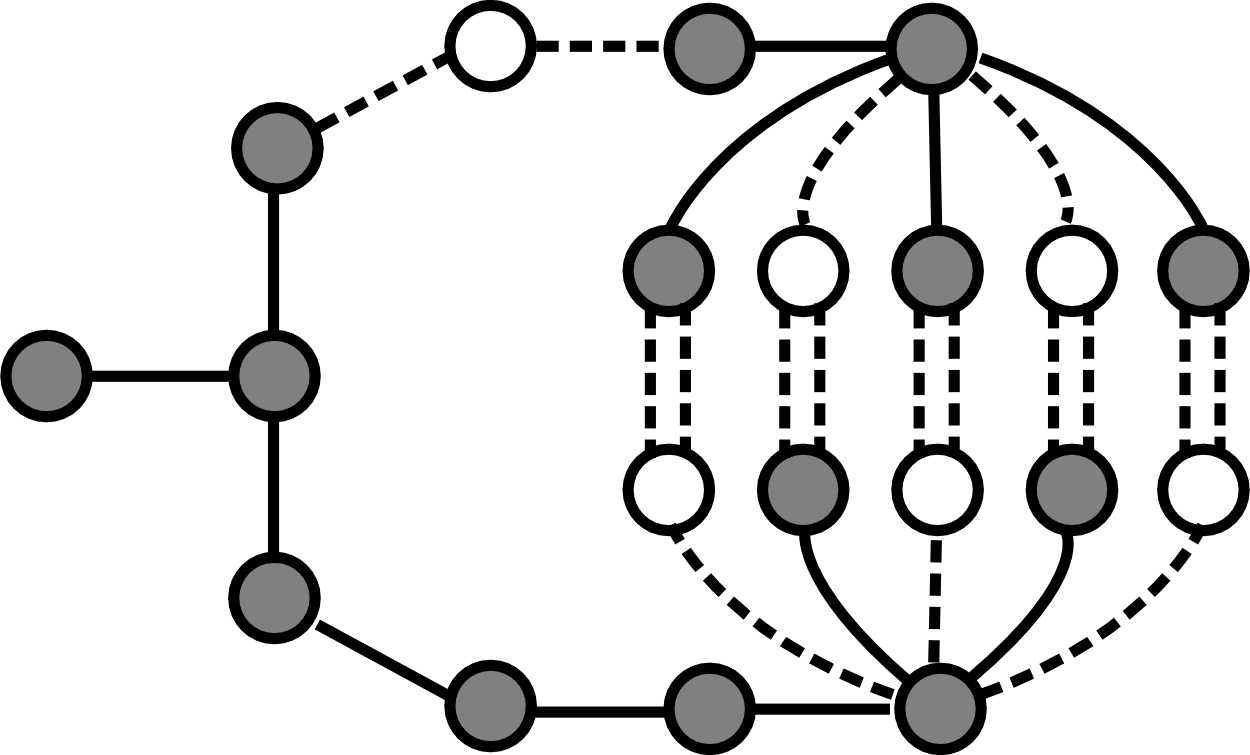}
		\caption{An $I_0^*$ and an $I_4^*$ fiber, five 2-sections and one section}
                \label{fig:lattice1}
        \end{subfigure}%
        ~
        \begin{subfigure}[b]{0.3\textwidth}
                \includegraphics[height=2cm]{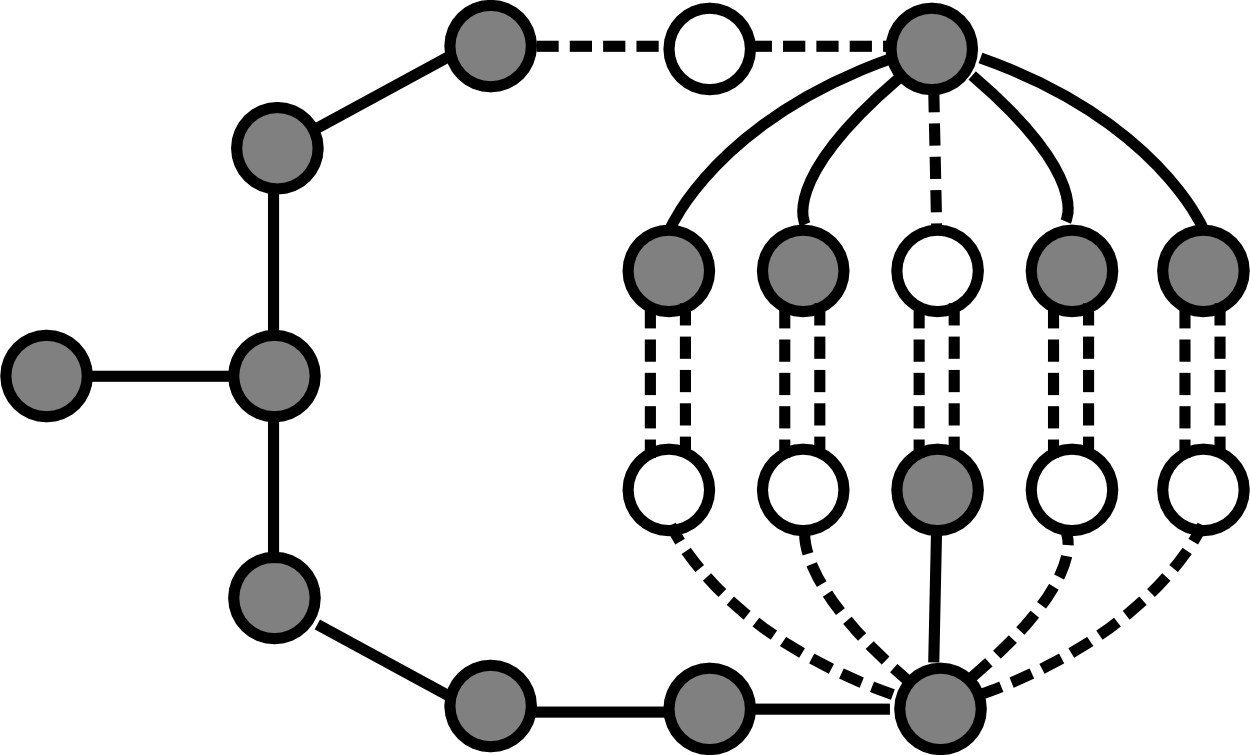}
		\caption{An $I_0^*$ and an $II^*$ fiber, six 2-sections}
   		\label{fig:lattice2}
        \end{subfigure}
        ~ 
        \begin{subfigure}[b]{0.3\textwidth}         
	\includegraphics[height=2cm]{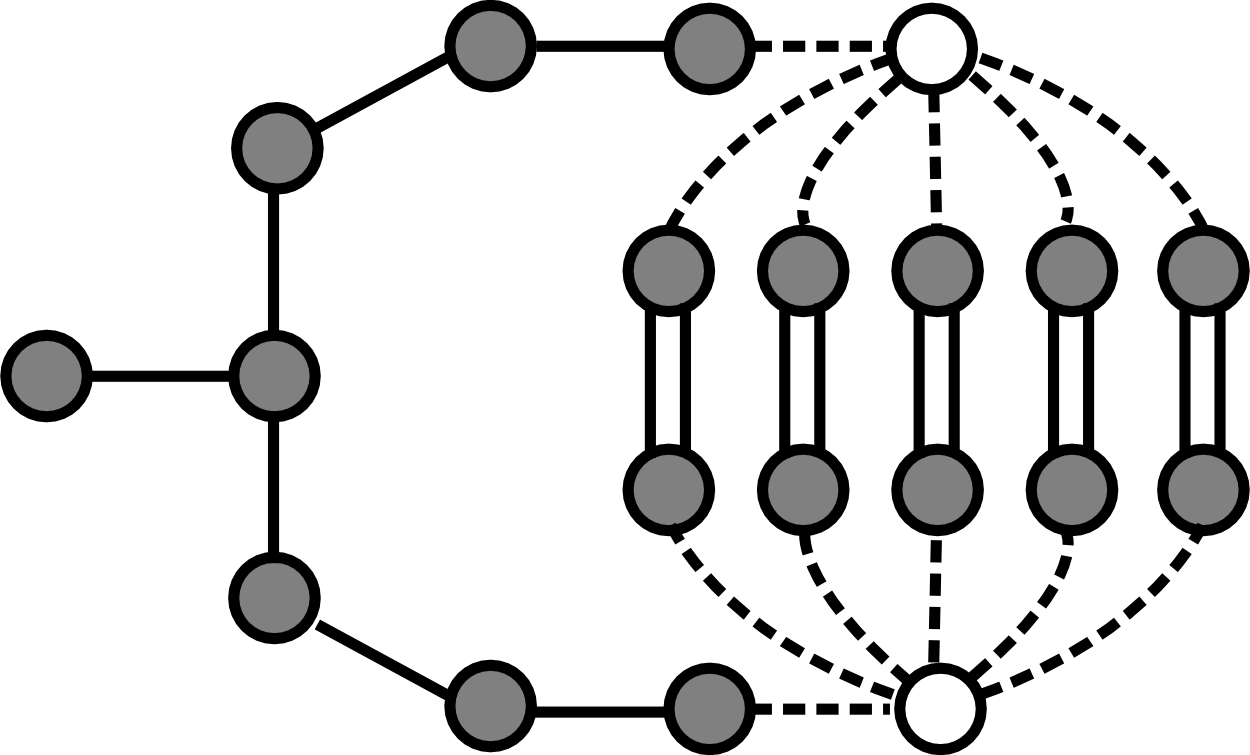}
	\caption{A $III^*$ and five $III$ fibers, two sections}   
	\label{fig:sub3}
        \end{subfigure}
        
	\caption{Lattice decompositions of $U\oplus D_{8} \oplus D_{4} \simeq U(2) \oplus E_{8} \oplus D_{4}$. }
	\label{fig:fibrations}
	
\end{figure}

\section{Building models from stick figures}

Let us start by reminding our 'rigidity' principle - which we obtain \emph{mutatis mutandis} from ~\cite[Lemma 8.1]{Di1}: 
\begin{philo}
The action of a non-symplectic group on a configuration of rational curves is essentially determined by the action at a single point.
\end{philo}

More concretely, the action at two transverse rational curves is constrained by the action on the volume form, and the action at two fixed points on a given curve is constrained by the nature of automorphisms of $\IP^1$.
In particular, for a primitive non-symplectic automorphism of order $16$, in a chain of transverse rational curves, we get the action described in Figure~\ref{fig:chain_lines}.

\begin{figure}[!htb]
\centering
\includegraphics[height=2cm]{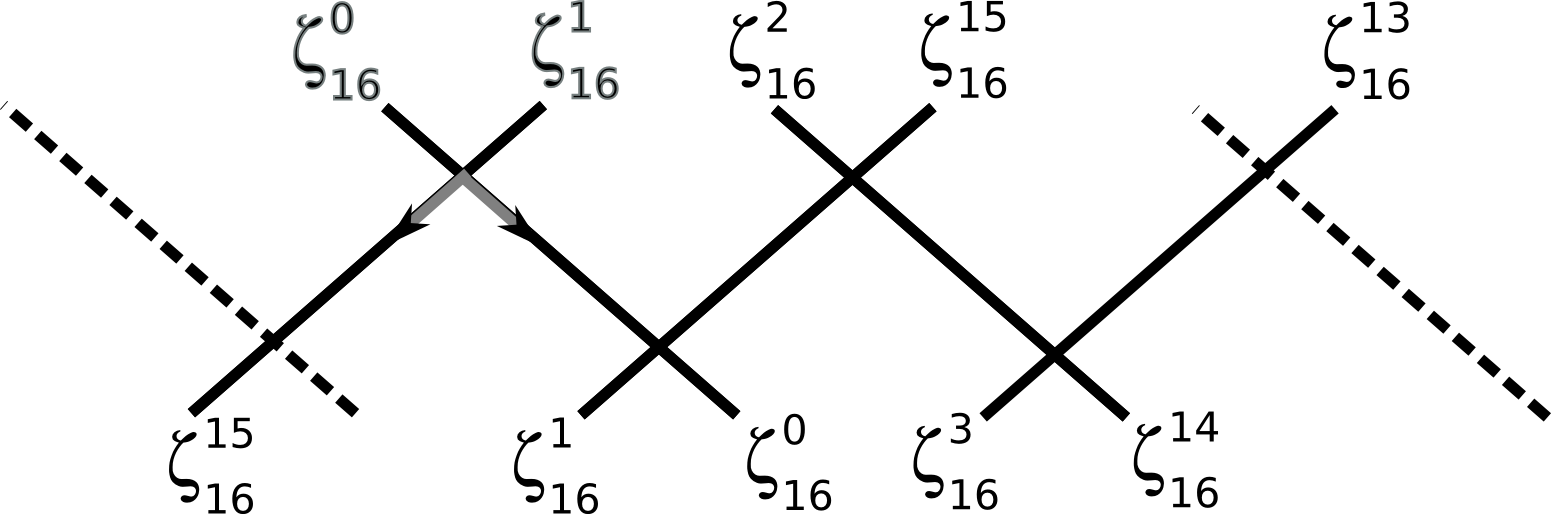}
\caption{Chain of rational curves.}
\label{fig:chain_lines}
\end{figure}

The annotation $\zeta_{16}^k$ means that along that rational curve, around the point of intersection, the action looks locally like $z\mapsto \zeta_{16}^k z$.
In accordance with the legend in the Introduction, we stylise this action as  in Figure~\ref{fig:chain}.

\begin{figure}[!htb]
\centering
\includegraphics[height=1em]{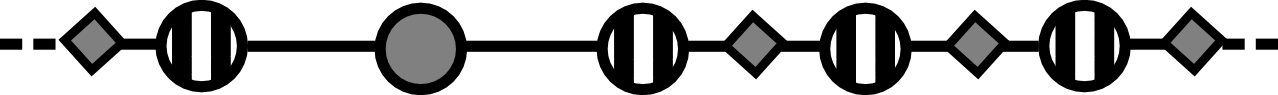}
\caption{Stylized chain of rational curves.}
\label{fig:chain}
\end{figure}

Note that we do not mark the local action at the intersection points.
This can be obtained in a trice from the observation that along the fixed curve, marked in grey, the associated action is $z\mapsto \zeta_{16}^0 z$.\\

Let us now analyze the situation when the fixed locus consists of $1$ rational curve and $10$ isolated points. 

\begin{lem}
\label{lem:action}
There is, up to isometry, a unique order $16$ action on the $U(2) \oplus D_{4} \oplus E_{8}$ configuration of rational curves that fixes $1$ rational curve and $10$ points.
This action is described in Figure~\ref{fig:action}.
\end{lem}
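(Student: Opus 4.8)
The plan is to turn the statement into a finite combinatorial classification on the incidence graph $\Gamma$ of Figure~\ref{fig:resolution}, using only the two local constraints behind the rigidity principle. First I would record these constraints as eigenvalue-propagation rules. At a node where two transverse smooth rational curves $C_1,C_2$ meet in a point fixed by $\sigma$, the eigenvalues $\zeta_{16}^{a},\zeta_{16}^{b}$ of $d\sigma$ on the two branches satisfy $a+b\equiv 1 \pmod{16}$ (for a suitable orientation), since their product is the scalar by which $\sigma$ acts on the volume form, a primitive $16$-th root because $\sigma$ is primitive of order $16$; this is the volume-form rule. Along an invariant curve $C\cong\IP^1$ on which $\sigma$ acts nontrivially, $\sigma|_C$ is a finite-order automorphism of $\IP^1$, hence has exactly two fixed points, and if it acts as $z\mapsto\zeta_{16}^{c}z$ at one of them it acts as $z\mapsto\zeta_{16}^{-c}z$ at the other; this is the $\IP^1$-rule. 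Chaining the two rules reproduces the monotone ladder of exponents of Figure~\ref{fig:chain_lines} along any string of transverse curves.

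Next I would split the curves of $\Gamma$ according to $\sigma$ into fixed (pointwise), stable (set-invariant but acting nontrivially) and mobile (sent to a different curve), and extract the purely local constraints this forces. Since the exceptional curves of the two $D_6$ blocks, of the $A_1$ block, and the line $L$ meet pairwise transversally in single nodes, every node of $\Gamma$ involves exactly two branches, so the two rules apply without triple-point subtleties. A stable curve carries exactly two $\sigma$-fixed points, so at most two of its neighbours can be invariant; the remaining neighbours must fall into $\sigma$-orbits of mobile curves of size dividing $16$. A fixed curve, on the other hand, meets only invariant curves -- otherwise a mobile neighbour and its image would both pass through the fixed node, contradicting the simple-node property -- and by the volume-form rule it meets each of them in a fixed point with transverse eigenvalue $\zeta_{16}^{1}$, so every such neighbour is itself stable and no second fixed curve can appear this way.

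I would then locate the single fixed curve, which I take as given by the hypothesis $k=1$. Starting from it, the two rules force a strictly monotone ladder of exponents along each transverse chain emanating from it; the length of the chains in $\Gamma$ bounds how far each ladder can run, and only a curve sitting in the correct position relative to the grey $D_6/D_6/A_1$ blocks and to $L$ lets every chain close up consistently. Up to the graph automorphisms of $\Gamma$ there is a single admissible placement; once it is fixed, the volume-form and $\IP^1$-rules determine the eigenvalue at every invariant node, hence dictate which neighbours are instead mobile and how they assemble into orbits. This produces exactly the assignment of Figure~\ref{fig:action}, and to finish I would check that its fixed points -- the invariant nodes on the fixed curve together with the isolated crossings of two stable curves -- number precisely $N=10$.

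I expect the main obstacle to lie in the global consistency of this propagation and in the uniqueness-up-to-isometry clause, rather than in any single local computation. Concretely, one must verify at every branch vertex of the $E_8$/$D_4$ portion of $\Gamma$ that the ladders arriving from different directions remain compatible with the $\IP^1$-rule $c\mapsto -c$ on each stable curve, that the leftover neighbours genuinely form $\sigma$-orbits of admissible size, and that no branching is forced either to an order exceeding $16$ or to a spurious second fixed curve. Ruling out the competing assignments -- by showing each one either violates a rule, overshoots the count $N=10$, or is carried onto the displayed action by a symmetry of $\Gamma$ -- is precisely what yields uniqueness, and is the delicate part of the argument.
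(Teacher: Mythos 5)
Your setup is the right one---the two propagation rules you state (product of transverse eigenvalues equals the character on the volume form; a nontrivial finite-order automorphism of $\IP^1$ has exactly two fixed points with opposite weights) are exactly the ``rigidity'' machinery the paper invokes, and your trichotomy fixed/stable/mobile matches the paper's figures. But the decisive step is missing: you never actually identify \emph{which} curve is the pointwise-fixed one, nor prove that only one consistent assignment exists. You write that ``only a curve sitting in the correct position \dots lets every chain close up consistently'' and that ``up to the graph automorphisms of $\Gamma$ there is a single admissible placement,'' and then concede that ruling out competing assignments ``is precisely \dots the delicate part of the argument.'' That delicate part \emph{is} the lemma; as written, the proposal is a plan for a proof rather than a proof. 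Note also that consistency of the ladders alone cannot suffice: the same graph carries the perfectly consistent Al Tabbaa--Sarti--Taki action with $N=4$, $k=0$, so the hypotheses $k=1$ and $N=10$ must enter the argument actively, whereas in your scheme $N=10$ only appears as a final sanity check.

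The paper closes this gap with a short counting argument that you should adopt. The line $L$ is intrinsically identified in the graph as the unique curve meeting all three disjoint singular configurations (the two $D_6$'s and the $A_1$). The automorphism either swaps or stabilizes the two $D_6$ branches; the requirement $N=10$ forces them to be stable, and symmetry then forces the $A_1$ curve to be stable as well. Hence $L$ carries at least $3$ fixed points, and by your own $\IP^1$-rule this forces $\sigma|_L$ to be the identity: $L$ is the fixed curve. From there the propagation you describe is completely determined with no further choices---in particular the degree-$6$ curves carry the weight $\zeta_{16}^4$, an order-$4$ action, which forces exactly $4$ of the $5$ pendant pairs to be permuted and $1$ to be stable, and the fixed-point count comes out to $10$. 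So the uniqueness you were worried about does not require an enumeration of placements of the fixed curve over the whole graph; it falls out of the forced triviality of the action on $L$. Incorporating that step would turn your outline into the paper's proof.
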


\begin{proof}
Let us first focus on the action restricted to $L$. \\
The automorphism either permutes or stabilizes the upper and lower part of the graph.
Since we need $10$ fixed points, the only possibility is that the upper and the lower branch are stable.
From the symmetry of the graph, the curve coming from the resolution of the $A_1$ singularity is also stable.
We have thus $3$ fixed points on $L$, which means that the action is trivial on $L$.
Using our rigidity philosophy, we can deduce the action on the whole graph.
In particular, on the rational curve corresponding to the vertices of degree $6$, the action has the form $z\mapsto \zeta_{16}^4 z$, i.e. it is of order $4$.
This implies that $4$ of the pairs of rational curves are permuted and $1$ is stable.
\end{proof}

\begin{figure}[!htb]

\centering
\includegraphics[height=2cm]{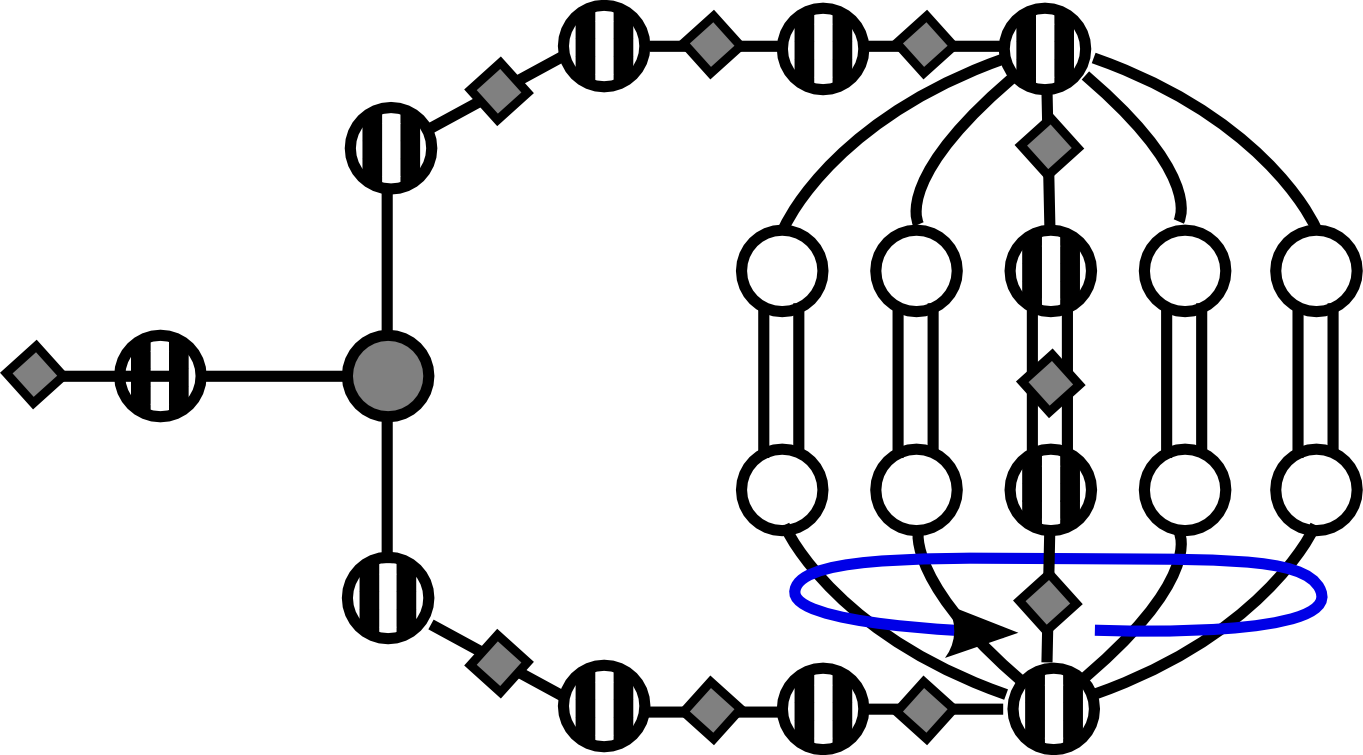}

\caption{Description of the action}
\label{fig:action}

\end{figure}

We are now ready to describe an explicit model of the sought after automorphism.


\subsection{A suitable Weierstrass model}

From Lemma~\ref{lem:action} we see that the action preserves the $III^{*}$ $\left(\tilde{E}_7\right)$ fiber and hence the fibration described in Figure~\ref{fig:sub3}.
We also see that the action on the sections is of order $4$, whence it is of the same order on the base.
Given that the fibration has a singular fiber of type $III^{*}$ and five fibers of type $III$, we can use the discriminantal vanishing criteria, as found in Miranda~\cite[Table IV.3.1]{Mi}, to write down the associated model:

$$
\left\{
\begin{array}{rcl}      
   y^{2} &=  &x^{3} + t^{3}(t^{4}-1) x \\
   \sigma &:   &(x,y,t) \mapsto (\zeta_{16}^{6} x, \zeta_{16}^{9} y, \zeta_{16}^{4} t)
\end{array}\right.
$$

One sees directly that the action is primitive (that is the induced action on $H^{2,0}(X,\IC)$ is also of order $16$) from the explicit equation of the volume form: $\frac{dx \wedge dt}{y}$.

\section{Relation to the other non-symplectic action}

\label{sec:appendix}

Figure~\ref{fig:sarti} schematizes the action, on the same surface as above, with $N=4$ isolated fixed points and $k=0$ fixed rational curves described by D. Al Tabbaa et al. in op. cit.

\begin{figure}[!htb]
\centering
\includegraphics[height=2cm]{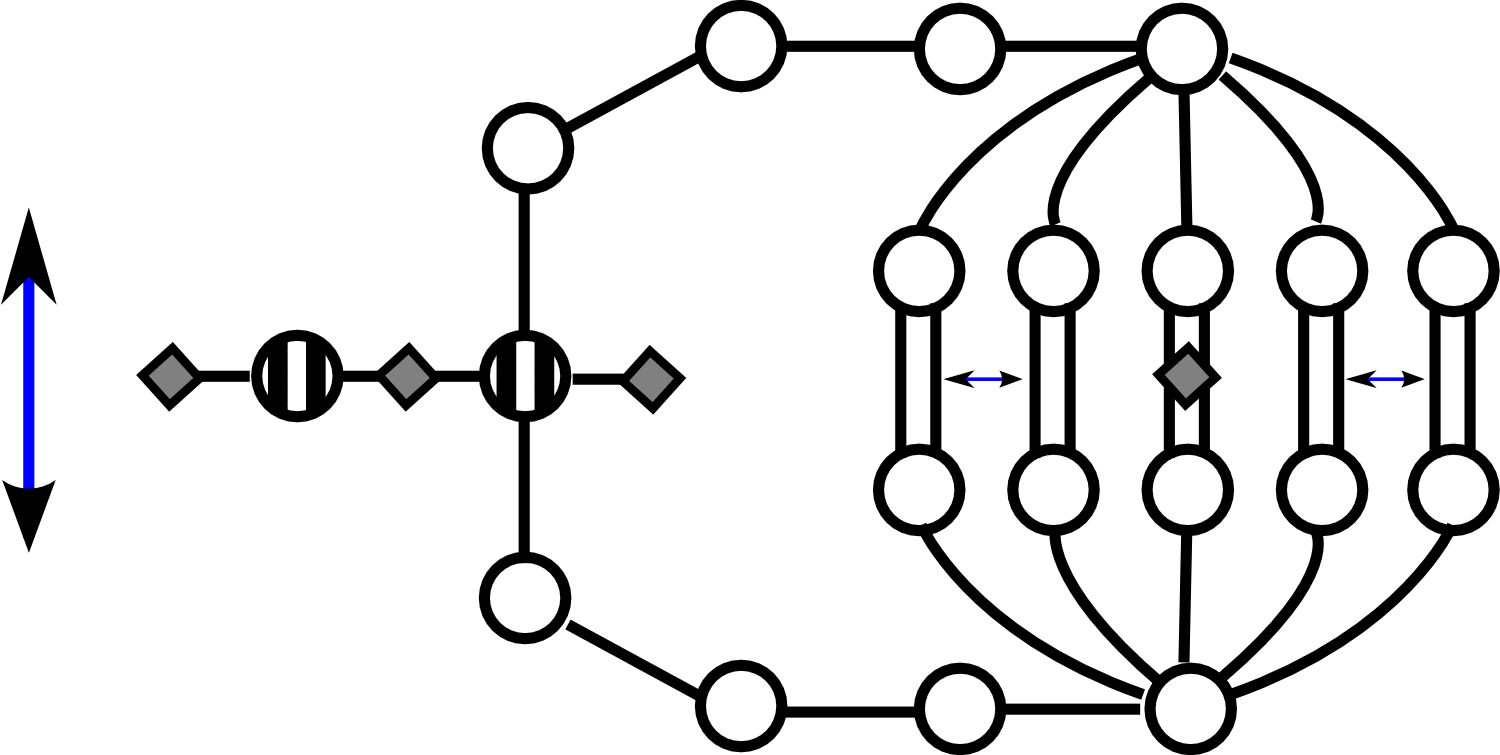}
\caption{Alternative non-symplectic action as in Al Tabbaa et al.}
\label{fig:sarti}
\end{figure}

As for the previous action, the $\tilde{E}_{7}$ fiber is preserved so we can expect to have a representation of this $N=4$, $k=0$ automorphism as a Weierstrass model.
Indeed, we have

$$
\left\{
\begin{array}{rcl}      
   y^{2} &=  &x^{3} + t^{3}(t^{4}-1) x \\
   \sigma_{AST} &:   &(x,y,t) \mapsto \left(\zeta_{16}^{6} \frac{y^{2}-x^{3}}{x^{2}}, \zeta_{16}^{9} \frac{x^{3}y-y^{3}}{x^{3}}, \zeta_{16}^{4} t \right)
\end{array}\right.
$$

One recognizes in the action on the fibers the composition of the map $(x,y)\mapsto (\zeta_{16}^{6} x,\zeta_{16}^{9}y)$ and of the affine representation of the translation by the $2$ torsion section $(0,0)$ : $(x,y)\mapsto (\frac{y^{2}-x^{3}}{x^{2}}, \frac{x^{3}y-y^{3}}{x^{3}})$.
Since the two torsion section is fixed by the automorphism of order $4$, these two actions commute. 

\par

Al Tabbaa et al., showed in their work how the fixed locus of $\sigma^2$ and $\sigma_{AST}^{2}$ are of the same type (i.e. 
$N_{\sigma^{2}}=10$ and $k_{\sigma^{2}}=1$).
What the Weierstrass models above show is that it is possible for the two automorphisms to be different factorizations of the same automorphism of order $8$, i.e.
$\sigma_{AST}^{2}=\sigma^{2}$.

\par 

Finally, let us make the following observation. 
Through composition of $\sigma$ and $\sigma_{AST}^{-1}$, we obtain a Nikulin involution (i.e. a symplectic involution):
$$
\tau_{\mathrm{symp}}:=\sigma \circ \sigma_{AST}^{-1}=\left(\frac{y^{2}-x^{3}}{x^{2}}, \frac{x^{3}y-y^{3}}{x^{3}}, t\right)
$$
And as we can see from our rigidity principle - by adding the weights of the respective actions at the germs around the fixed points - the action of $\tau$ fixes $8$ points (see Figure~\ref{fig:sym}) which is exactly the number predicted in V. Nikulin's work (see ~\cite{Ni,Mo}).

\begin{figure}[!htb]
\centering
\includegraphics[height=2cm]{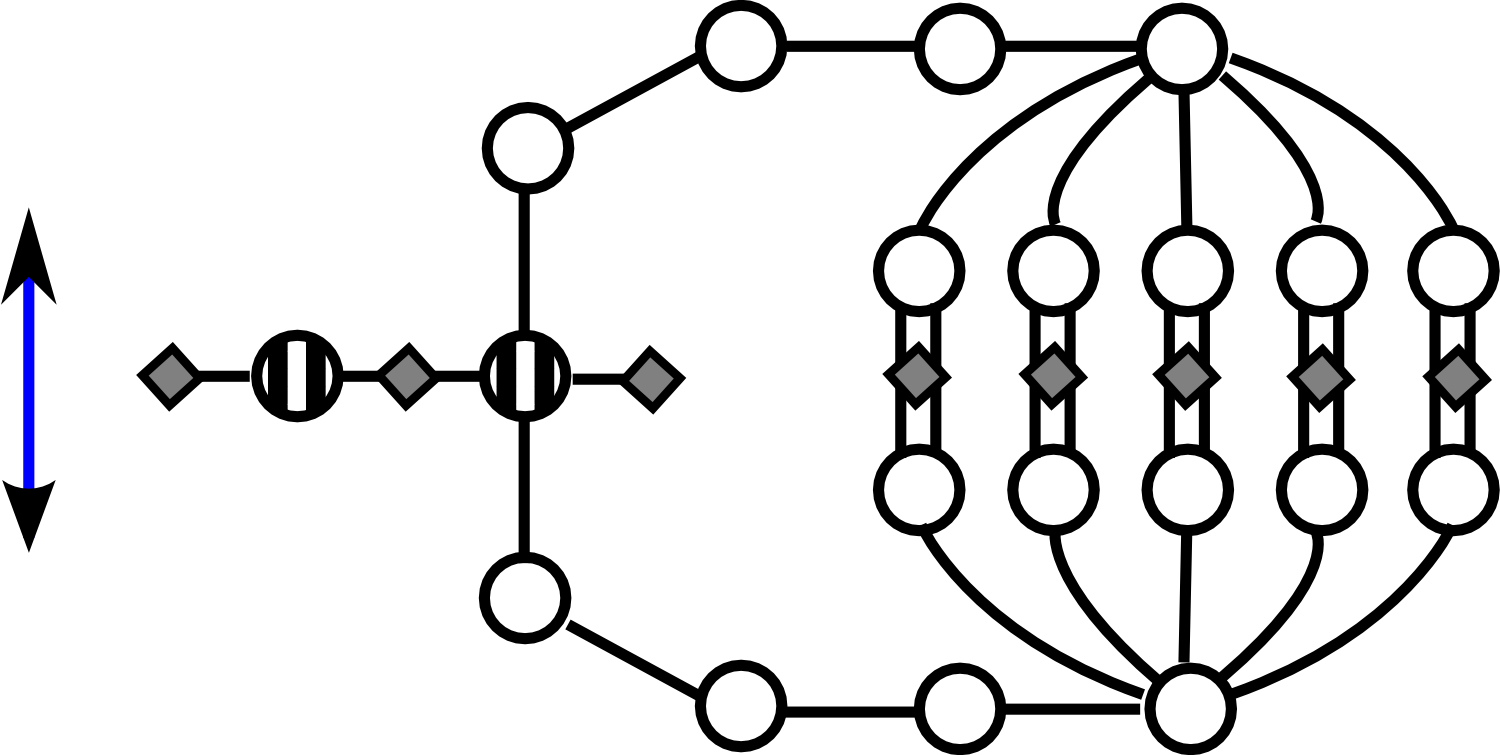}
\caption{Symplectic action of $\tau$.}
\label{fig:sym}
\end{figure}


\end{document}